\documentclass[a4paper,12pt]{article}
\usepackage[english]{babel}
\usepackage[T1]{fontenc}
\usepackage[utf8]{inputenc}
\usepackage[obeyspaces]{url}
\usepackage[dvipsnames]{xcolor}

\usepackage{mathtools, amsmath, amsthm, amssymb, mathtools,
  tikz,tikz-cd, subcaption, framed, float, tensor, bm, dsfont,
  floatpag, microtype, fullpage, graphicx}

\usepackage[normalem]{ulem}
\usepackage{thm-restate}

\usepackage[
  bookmarks=false, 
  colorlinks,
  citecolor=blue,
  linkcolor=blue,
  urlcolor=blue,
]{hyperref}
\usepackage[affil-it]{authblk}
\usepackage{cleveref}
\usetikzlibrary{shapes.geometric, backgrounds, calc,
  arrows, automata, fit, positioning, patterns,
  decorations.pathreplacing, tikzmark, arrows.meta}

\newtheorem{theorem}{Theorem}
\newtheorem{conjecture}{Conjecture}
\newtheorem{question}{Question}
\newtheorem{lemma}{Lemma}

\theoremstyle{definition}
\newtheorem{definition}{Definition}

\newcommand\oeis[1]{\href{https://oeis.org/#1}{#1}}

\newcommand{\W}{\mathcal{W}}

\newcommand{\N}{\mathbb{N}}
\newcommand{\Q}{\mathbb{Q}}
\newcommand{\R}{\mathbb{R}}

\newcommand{\s}{\mathcal{S}} 
\def\S{\mathcal{S}}


\let\le\leqslant
\let\ge\geqslant

\author{Sergey Kirgizov}
\affil{\rm LIB, Université de Bourgogne Franche-Comté\protect\\
  B.P. 47 870, 21078 Dijon Cedex France\protect\\
   {\tt E-mail: sergey.kirgizov@u-bourgogne.fr  }
}

\title{$\mathbb{Q}$-bonacci words and numbers}

\begin{document}

\maketitle

\begin{abstract}
  We present a quite curious generalization of multi-step Fibonacci
  numbers.  For any positive rational $q$, we enumerate binary words
  of length $n$ whose maximal factors of the form $0^a1^b$ satisfy $a
  = 0$ or $aq > b$.  When $q$ is an integer we rediscover classical
  multi-step Fibonacci numbers: Fibonacci, Tribonacci, Tetranacci,
  etc.  When $q$ is not an integer, obtained recurrence relations are
  connected to certain restricted integer compositions.  We also
  discuss Gray codes for these words, and a possibly novel
  generalization of the golden ratio.
\end{abstract}


\section{Introduction}

Multi-step generalization of Fibonacci numbers can be traced back to
the works of Miles~\cite{miles} and 14-year old Feinberg~\cite{fei}. A
lot of different studies about these numbers appear after, including
the works of Flores~\cite{flores}, Miller~\cite{miller},
Dubeau~\cite{dubeau} and Wolfram~\cite{wolf}. A bunch of combinatorial
objects are enumerated by these numbers. For instance, the Knuth's
exercise~\cite[p. 286]{knuth3} shows that the set of length $n$ binary
words avoiding $k$ consecutive 1s is enumerated by $k$-bonacci numbers
respecting $a_n = a_{n-1} + a_{n-2} + \cdots + a_{n-k}$, with initial
conditions $a_0 = 1, a_{-1} = 1,$ and $a_j = 0$ for any $j < -1$.

Independently, in two recent papers~\cite{ourfibo, EI}, a new (as far
as we know) kind of restricted binary words enumerated by generalized
Fibonacci numbers was considered. For any $n \in \N$, Baril, Kirgizov
and Vajnovszki~\cite{ourfibo} defined a set $\W_{q,n}$, parameterized
by a positive natural number $q$, as follows:

\begin{definition}
  $\W_{q,n}$ is the set binary words of length $n$ such that for every
  maximal consecutive subword (factor) of the form $0^a1^b$ which
  satisfies $a > 0$ we have $aq > b$, where $x^\ell$ denotes a factor
  of length $\ell$ consisting only of symbols $x$. Figure~\ref{ex}
  presents some examples.
\label{q}
\end{definition}

Eğecioğlu and Iršič deal in~\cite{EI} with a graph whose vertex set
corresponds to the words from $\W_{1,n}$ starting with zero. Two
vertices are adjacent in this graph if and only if the corresponding
words differ at only one position.

In this short paper, we extend the above definition of $\W_{q,n}$ for
the case where $q$ is a positive rational number, provide generating
functions and give a method to construct linear recurrence relation
for the sequence $(|\W_{q,n}|)_{n\ge 0}$ with 0-or-1 coefficients.

\begin{figure}[ht]
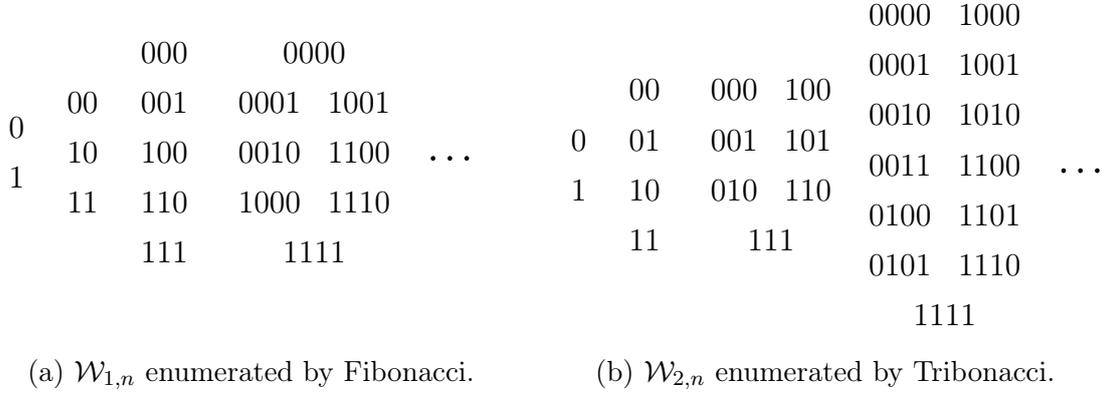

  \centering
  \begin{subfigure}[b]{0.43\textwidth}
      \begin{tabular}{c}
        0 \\
        1
      \end{tabular}
      \begin{tabular}{c}
        00 \\
        10 \\
        11
      \end{tabular}
      \begin{tabular}{c}
        000 \\
        001 \\
        100 \\
        110 \\
        111 
      \end{tabular}
      {\setlength{\tabcolsep}{5pt}
      \begin{tabular}{cc}
        \multicolumn{2}{c}{0000} \\
        0001 & 1001 \\
        0010 & 1100 \\
        1000 & 1110 \\
        \multicolumn{2}{c}{1111} \\
      \end{tabular}
      }
      \vspace*{2em}
          {\Huge ...}
    \caption{$\W_{1,n}$ enumerated by Fibonacci.}\label{ex1}
  \end{subfigure}
  \quad
  \begin{subfigure}[b]{0.45\textwidth}
      \begin{tabular}{c}
        0 \\
        1
      \end{tabular}
      \begin{tabular}{c}
        00 \\
        01 \\
        10 \\
        11
      \end{tabular}
      {\setlength{\tabcolsep}{5pt}
      \begin{tabular}{cc}
        000 & 100\\
        001 & 101 \\
        010 & 110 \\
        \multicolumn{2}{c}{111}
      \end{tabular}
      \begin{tabular}{cc}
        0000 & 1000 \\
        0001 & 1001 \\
        0010 & 1010 \\
        0011 & 1100 \\
        0100 & 1101 \\
        0101 & 1110 \\
        \multicolumn{2}{c}{1111} \\
      \end{tabular}
      }
      {\Huge ...}
    \caption{$\W_{2,n}$ enumerated by Tribonacci.}\label{ex2}
  \end{subfigure}
  \caption{Sets $\W_{q,n}$ for small values of $n$ and $q$.}
  \label{ex}
\end{figure}

\section{Set construction and generating function}

For $q \in \Q^+$, the set $\W_q = \bigcup_{n \in \N} \W_{q,n}$ is constructed as follows:
\begin{equation}
\W_q = \bigcup_{k=0}^{\infty} \{1^k\} \cup \W_q \cdot \s_q, \text{
  where } \s_q = \bigcup_{i=0}^\infty \{ \overbrace{0\ldots00}^{1 +
  \left\lfloor \frac{i}{q} \right\rfloor \text{ zeros}}\mkern-13mu
\underbrace{1\ldots11}_{i \text{ ones}} \;\; \}
\label{struct}
\end{equation}
and $\W_q \cdot \s_q$ corresponds to a set of all possible
concatenations of elements from $\W_q$ and $\s_q$ (in this order).
Table~\ref{t1} shows shortest elements of $\s_q$ for different values
of $q$. A word $111000010000110010 \in \W_{1,18}$ decomposes as $111
\; 0 \; 0 \; 001 \; 0 \; 00011 \; 001 \; 0,$ but a word
$111000010000110010 \in \W_{2,18}$ decomposes as $111 \; 0 \; 0 \; 0
\; 01 \; 0 \; 0 \; 0011 \; 0 \; 01 \; 0$, and $111000010000110010
\notin \W_{1/2}$ because the factor $001$ is not in $\s_{1/2}$ and the
word cannot be constructed.

\begin{table}[ht]
\begin{center}
  \small
  \begin{tabular}{r|r|r|r|r}
$\s_{1/2}$ & $\s_{2/3}$ & $\s_1$ & $\s_2$ & $\s_{3/2}$  \\\hline
0                & 0             & 0           & 0        & 0         \\
0001             & 001           & 001         & 01       & 01        \\
0000011          & 000011        & 00011       & 0011     & 0011      \\
0000000111       & 00000111      & 0000111     & 00111    & 000111    \\
0000000001111    & 00000001111   & 000001111   & 0001111  & 0001111   \\
0000000000011111 & 0000000011111 & 00000011111 & 00011111 & 000011111 \\
$\cdots$ & $\cdots$ & $\cdots$ & $\cdots$ & $\cdots$ \\
  \end{tabular}
\end{center}
\caption{Shortest elements from sets $\s_q$.}
\label{t1}
\end{table}

Let $S_q(x) = \sum_{n = 0}^\infty s_n x^n$ and $W_q(x) = \sum_{n =
  0}^\infty w_n x^n$ be generating functions for $\S_q$ and
$\W_q$, with respect to the word length, marked by $x$. Coefficients
$s_n$ and $w_n$ are the numbers of words of length $n$ from sets
$\S_q$ and $\W_q$. Using the classical symbolic method to derive
formulas for generating functions (see Flajolet-Sedgewick
book~\cite{fla}), we see that $\bigcup_{k=0}^{\infty} \{1^k\}$ has the
generating function $\frac{1}{1-x}$, and Eq.~\eqref{struct} gives
$W_q(x) = \frac{1}{1-x} + W_q (x) S_q(x)$, so
\begin{equation}
  W_q(x) = \frac{1}{\big(1 - S_q(x)\big)(1-x)}.
  \label{wqx}
\end{equation}

In the following we consider a more refined (bivariate) version of
these generating functions with respect to the number of zeros and
ones. We note, with a slight abuse of notation,
\begin{equation}
    W_q(y,z) = \sum_{r = 0}^{\infty} \sum_{i=0}^\infty w_{r,i} z^r y^i,
\end{equation}
where $w_{r,i}$ is the number of words in $\W_q$ having exactly $r$
zeros and $i$ ones. It is easy to see that $\W_q(x)$ is retrieved from
$\W_q(y,z)$ by replacing both $y$ and $z$ by $x$, that is $\W_q(x) =
\W_q (x,x)$. The bivariate generating function $S_q(y,z)$ is defined in a similar
way. In this setting, $\bigcup_{k=0}^{\infty} \{1^k\}$ has the
generating function $\frac{1}{1-y}$, and instead of Eq.~\eqref{wqx} we
have

\begin{equation}
  W_q(y,z) = \frac{1}{\big(1 - S_q(y,z)\big)(1-y)}.
  \label{wqe}
\end{equation}

\bigskip
Now, we construct the set of suffixes $\S_q(y,z)$ and derive its generating
function $S_q(y,z)$.

\begin{definition}
  Let $q = \frac{c}{d}$ be a positive rational number represented by
  the irreducible fraction (e.g. $4 = \frac{4}{1}$), a word factor
  $0^d1^c$ is called a {\em spawning infix}. The generating function
  with respect to the number of zeros (marked by $z$) and the number
  of ones (marked by $y$) for the spawning infix $0^d1^c$ is $z^dy^c$.
  (We intentionally write $z^d$ before $y^c$. According to our idea,
  this should reflect the structure of the factors: zeros appear
  before ones.)
  \label{infix}
\end{definition}

\begin{definition}
  A polynomial
  $$P_{q=\frac{c}{d}}(y,z) = \sum_{i=0}^{c-1} z^{1 + \left\lfloor
    \frac{i}{q} \right\rfloor} y^i $$ is called {\em a model
    polynomial} of a positive rational number $q$ represented by the
  irreducible fraction $q = \frac{c}{d}$.
\end{definition}

For instance, $P_{\frac{2}{3}} (y,z) = z + z^2y$, $P_{\frac{3}{2}}
(y,z) = z + zy + z^2y^2$, and $P_{1/k}(x) = z$ for any $k \in \N^+$.
Figure~\ref{gr} presents a graphical interpretation of model
polynomials.

\begin{figure}[ht]
  \centering
  \includegraphics[width=25em]{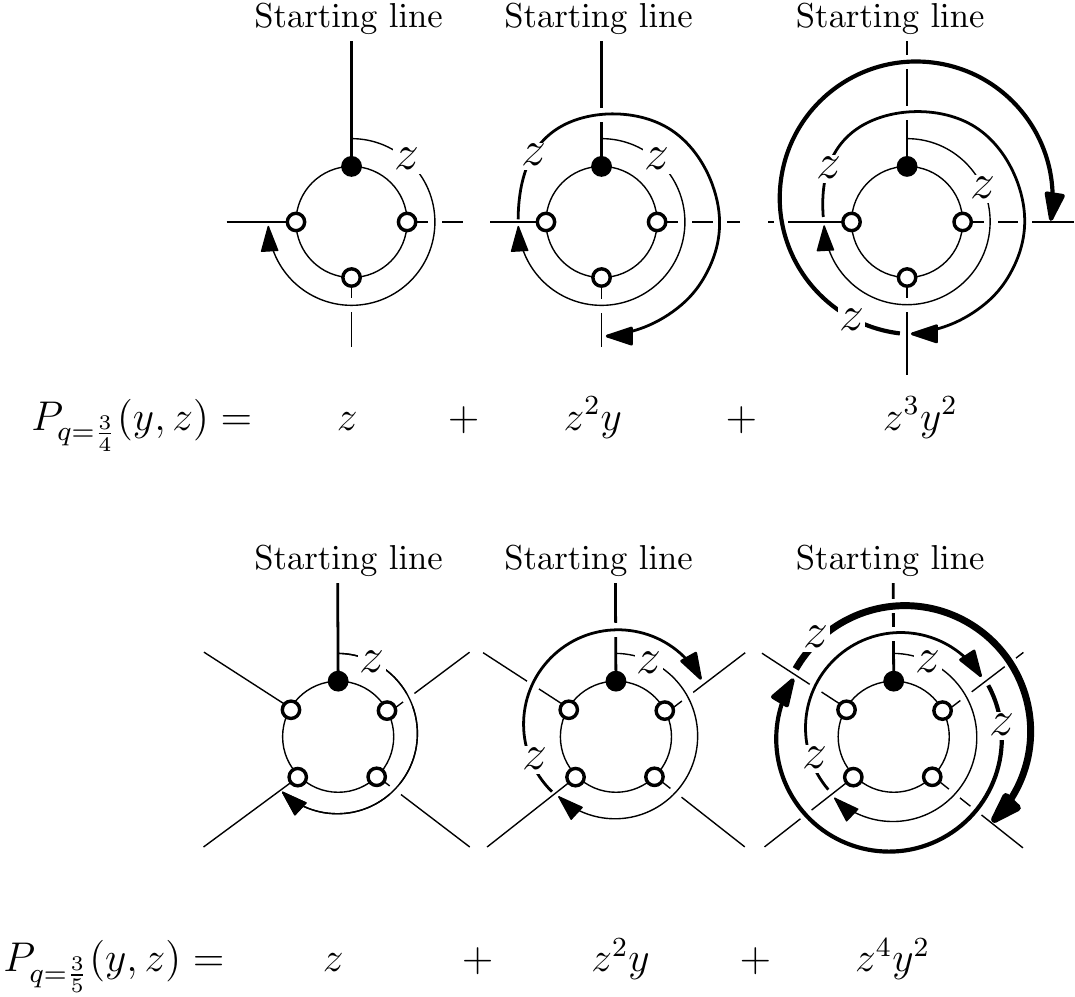}

  \caption{A graphical representation of model polynomial
    $P_{q=\frac{3}{4}} = z + z^2 y + z^3 y^2$. For $j>0$, a term
    $z^iy^j$ in a model polynomial means that one must make $i$
    arc-steps of the angle $2q\pi$ in order to cross the starting line
    $j$ times.  }
  \label{gr}
\end{figure}

\begin{lemma}
  Let $q \in \Q^+$ be represented by the irreducible fraction
  $\frac{c}{d}$.  The generating function $S_q(y,z) =
  \sum_{r=0}^\infty \sum_{i=0}^\infty s_{r,i} z^r y^i$ where $s_{r,i}$
  is the number of words of the form $0^r1^i$,
  where $r = {1+\lfloor i/q \rfloor}$ is
  $$S_{q=\frac{c}{d}}(y,z) = \frac{P_q(y,z)}{1-z^dy^c}.$$
  \label{l1}
\end{lemma}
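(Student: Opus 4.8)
The plan is to prove this by establishing a bijection between $\S_q$ and a Cartesian product, and then translating it into generating functions. First I would unpack exactly which words lie in $\S_q$. According to Eq.~\eqref{struct}, $\S_q$ consists of the words $0^{1+\lfloor i/q\rfloor} 1^i$ for all $i \ge 0$. So the claim is that these words, weighted by $z^{(\text{number of zeros})} y^{(\text{number of ones})}$, sum to $\frac{P_q(y,z)}{1-z^d y^c}$.

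The key observation I would exploit is the arithmetic identity $1 + \lfloor (i+c)/q \rfloor = 1 + \lfloor i/q \rfloor + d$, which holds because $q = c/d$ and $\lfloor (i+c)/q\rfloor = \lfloor (i+c)d/c\rfloor = \lfloor id/c\rfloor + d$ (since $cd/c = d$ is an integer, adding it commutes with the floor). This means that increasing the exponent $i$ of ones by $c$ increases the exponent of zeros by exactly $d$. Consequently every $i \ge 0$ can be written uniquely as $i = mc + j$ with $m \ge 0$ and $0 \le j \le c-1$, and the corresponding word $0^{1+\lfloor i/q\rfloor}1^i$ has zero-exponent $1 + \lfloor j/q\rfloor + md$ and one-exponent $j + mc$. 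Its monomial is therefore $z^{1+\lfloor j/q\rfloor} y^j \cdot (z^d y^c)^m$.

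From here the computation is routine: summing over all $i \ge 0$ means summing over all pairs $(j,m)$ with $0 \le j \le c-1$ and $m \ge 0$, which factors as
$$S_q(y,z) = \sum_{j=0}^{c-1} \sum_{m=0}^{\infty} z^{1+\lfloor j/q\rfloor} y^j (z^d y^c)^m = \left(\sum_{j=0}^{c-1} z^{1+\lfloor j/q\rfloor} y^j\right)\cdot \frac{1}{1 - z^d y^c} = \frac{P_q(y,z)}{1-z^dy^c},$$
where the inner geometric series converges as a formal power series since $z^d y^c$ has positive total degree $c+d \ge 2$. I should also check that the decomposition is genuinely a bijection, i.e. distinct $i$ give distinct words (immediate, since the number of ones already determines $i$) and conversely, so there is no overcounting; this makes the term-by-term translation from words to monomials legitimate.

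The main obstacle — really the only non-mechanical point — is verifying the floor identity $\lfloor (i+c)/q\rfloor = \lfloor i/q\rfloor + d$ cleanly, since $q$ is rational rather than integer and one must be careful that the "spawning infix" $0^d1^c$ genuinely corresponds to the self-similar shift of the construction. Once that identity is in hand, the rest is the standard symbolic-method bookkeeping, and it also explains why $P_q$ is truncated at $i = c-1$: those are exactly the representatives of residue classes modulo $c$ before the pattern begins to repeat with the period encoded by $z^d y^c$.
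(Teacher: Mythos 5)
Your proof is correct and follows essentially the same route as the paper: both factor $\S_q$ as the $c$ base words $0^{1+\lfloor j/q\rfloor}1^j$, $0\le j\le c-1$ (giving $P_q(y,z)$), times an arbitrary number of copies of the spawning infix $0^d1^c$ (giving the geometric factor $\frac{1}{1-z^dy^c}$). The only difference is presentational — the paper phrases the repetition as iteratively injecting $0^d1^c$ after the rightmost $0$ and cites the symbolic method, whereas you verify the underlying floor identity $\lfloor (i+c)/q\rfloor=\lfloor i/q\rfloor+d$ explicitly, which if anything makes the step more airtight.
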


\begin{proof}
  Let us construct the set $\S_q$ in relation~\eqref{struct}
  iteratively.  First add the word $0$ and all words of the form
  $0^{1+\lfloor i/q \rfloor}1^i$ for $i \in [1,c-1]$.  These words
  correspond to the terms of the model polynomial $P_q(y,z)$. Other
  words of $\S_q$ are obtained by iteratively injecting the spawning
  infix $0^d1^c$ just after the rightmost 0 in already generated
  words.  Using the classical symbolic method~\cite{fla} we see that
  $\frac{1}{{1-z^dy^c}}$ generates a sequence of infix additions. By
  construction $s_{r,i}$ is either 0 or 1.
\end{proof}

To illustrate Lemma~\ref{l1} we take $q=3/2$.  In this case, the model
polynomial is $$P_{\frac{3}{2}} (y,z) = z + zy + z^2y^2, $$
the corresponding words are
$$0, 01, 0011,$$ and the spawning infix is $00111$.  Adding the infix
just after the rightmost 0 we obtain
$$0\underline{00111}, 0\underline{00111}1, 00\underline{00111}11.$$
And repeating this operation, we get
$$000\underline{00111}111, 000\underline{00111}1111, 0000\underline{00111}11111,
00000\underline{00111}111111, \ldots$$
Finally, we obtain the set $\S_{\frac{3}{2}}$.

\bigskip


\begin{theorem}
  The generating function $W_q(y, z) = \sum_{r=0}^\infty
  \sum_{i=0}^\infty w_{r,i} z^r y^i$ where $w_{r,i}$ is number of
  words from $\W_q$ of length $r+i$ containing exactly $r$ zeros and
  $i$ ones is
  $$W_q(y,z) = \frac{1 - z^dy^c}{(1-y)\big(1-z^dy^c - P_q(y,z) \big)}.$$
  \label{wqt}
\end{theorem}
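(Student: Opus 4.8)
The plan is to obtain the formula as a direct algebraic consequence of Lemma~\ref{l1} combined with the structural identity~\eqref{wqe}. Recall that the decomposition~\eqref{struct} expresses every word of $\W_q$ either as a block $1^k$ or, uniquely, as a shorter word of $\W_q$ followed by an element of $\S_q$; applying the symbolic method to this unambiguous grammar already produced
\[
  W_q(y,z) = \frac{1}{\bigl(1 - S_q(y,z)\bigr)(1-y)} .
\]
This manipulation is legitimate at the level of formal power series because $S_q(y,z)$ has zero constant term — its smallest word is the single letter $0$, contributing the monomial $z$ — so $1 - S_q(y,z)$ is invertible in $\Z[[y,z]]$, and so is $1-y$.

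First I would substitute the closed form from Lemma~\ref{l1}, namely $S_q(y,z) = P_q(y,z)/(1 - z^dy^c)$, into the displayed identity, using that $1 - z^dy^c$ is also invertible in $\Z[[y,z]]$ (it has constant term $1$). Then
\[
  1 - S_q(y,z) = \frac{(1 - z^dy^c) - P_q(y,z)}{1 - z^dy^c},
\]
and inverting this and dividing by $1-y$ yields exactly
\[
  W_q(y,z) = \frac{1 - z^dy^c}{(1-y)\bigl(1 - z^dy^c - P_q(y,z)\bigr)},
\]
as claimed. The interpretation of the coefficients $w_{r,i}$ as the number of words of $\W_q$ with $r$ zeros and $i$ ones is inherited verbatim from the bivariate bookkeeping set up before~\eqref{wqe}.

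There is essentially no analytic obstacle here; the only points needing care are formal, namely checking that $P_q(0,0)=0$ (indeed $P_q$ begins with the $i=0$ term $z$) and that $z^dy^c$ likewise has positive degree, so that $1 - z^dy^c - P_q$ has constant term $1$ and every series inversion above takes place in $\Z[[y,z]]$ with no spurious cancellation. As a consistency check one can specialise: for integer $q=k$ one has $d=1$, $c=k$, $P_k(y,z) = z(1 + y + \cdots + y^{k-1})$, and the formula collapses to $W_k(y,z) = (1 - zy^k)/(1 - y - z + zy^{k+1})$, whose diagonal $W_k(x,x)$ reproduces the $k$-bonacci generating function (for $k=1$ it gives $(1+x)/(1-x-x^2)$, matching $1,2,3,5,8,\ldots$ from Figure~\ref{ex1}). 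If a reader preferred an argument not invoking Lemma~\ref{l1}, one could instead re-derive $S_q$ by the same spawning-infix insertion decomposition and then proceed identically, but that would merely reprove the lemma.
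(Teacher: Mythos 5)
Your proposal is correct and follows exactly the paper's route: the paper's proof of Theorem~\ref{wqt} is precisely the substitution of Lemma~\ref{l1} into Equation~\eqref{wqe}, which you carry out (with added, and correct, checks on formal invertibility and the $k$-bonacci specialisation). No discrepancy to report.
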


\begin{proof}
  It follows directly from Lemma~\ref{l1} and Equation~\eqref{wqe}.
\end{proof}

Evaluating $W_q(x,x)$ we get the generating function $W_q(x) =
\frac{1 - x^{c+d}}{(1-x)\big(1-x^{c+d} - P_q(x,x) \big)}$
where $x$ marks the length.

The total number of 0s (in other words, the {\em popularity} of 0s) in
all words from $\W_{q=1,n}$ is enumerated by a shift of the sequence
\oeis{A6478} in Sloane's On-line Encyclopedia of Integer
Sequences~\cite{oeis}.  The corresponding generating function is
obtained by evaluating $\frac{\partial W_1(x,xz)}{\partial z}\vert_{z
  = 1}$.  It is quite unexpected, but the sequence \oeis{A6478}
enumerates also the edges in the {\em Fibonacci hypercube} considered
by Rispoli and Cosares~\cite{ris-cos}. A Fibonacci hypercube is a
polytope determined by the convex hull of the {\em Fibonacci cube}
which in turn is defined by Hsu in~\cite{hsu} as the graph whose
vertices correspond to binary words of size $n$ avoiding two
consecutive 1s and where two vertices are connected if and only if the
corresponding words differ at only one position.  Is it possible to
give some kind of a nice bijective construction between the edges of
Fibonacci Hypercube and the 0s in words from $\W_{q=1,n}$?  As far as
we could check, no other
sequences in OEIS~\cite{oeis} correspond to the popularity of 0s (or
1s) for other values of $q$.

\section{Linear recurrence with 0-1 coefficients}

We shall prove the following result.

\begin{theorem}
  Let a positive rational number $q$ be represented by the irreducible
  fraction $\frac{c}{d}$.  The number of $n$-length binary words from
  $\W_{q,n}$, denoted by $w_n$, can be expressed as
  \begin{equation}
    w_n = \sum_{j\in J} w_{n-j} + w_{n-(c+d)},
    \label{req}
  \end{equation}
  where $J$ is the set
  of powers from the model polynomial $P_{q=\frac{c}{d}}(x,x)$. For
  example, when $q=\frac{3}{2}$, we have $P_{\frac{3}{2}}(x,x) = x +
  x^2 + x^4$, and $J = \{1,2,4\}$.
  
  Initial conditions $w_0, w_1, \cdots, w_{c+d-1}$ are obtained by setting $w_n = 0$ for $n < 0$,
  unrolling Equation~\eqref{req} from left to right, while adding an extra 1 for every
  $w_i$ for $0 \le i < c+d$.
  \label{rec}
\end{theorem}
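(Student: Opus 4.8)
The plan is to read the recurrence off a cleared-denominator form of the univariate generating function. Starting from Theorem~\ref{wqt} and substituting $y=z=x$ one gets, as already observed,
\[
W_q(x) \;=\; \frac{1-x^{c+d}}{(1-x)\bigl(1-x^{c+d}-P_q(x,x)\bigr)}.
\]
The crucial first step is a cancellation: since $1-x^{c+d}=(1-x)(1+x+\cdots+x^{c+d-1})$, the factor $1-x$ in the numerator kills the one in the denominator, leaving
\[
W_q(x) \;=\; \frac{1+x+\cdots+x^{c+d-1}}{\,1-x^{c+d}-P_q(x,x)\,}.
\]
Writing $P_q(x,x)=\sum_{i=0}^{c-1}x^{e_i}$ with $e_i=1+\lfloor i/q\rfloor+i$, so that $J=\{e_0,\dots,e_{c-1}\}$ is exactly the exponent set in the statement, I set $Q(x):=1-\sum_{j\in J}x^j-x^{c+d}$.

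Next I would check that $Q(x)$ really has coefficients in $\{-1,0,1\}$, i.e. that the monomials $-x^{e_0},\dots,-x^{e_{c-1}},-x^{c+d}$ are pairwise distinct. The $e_i$ are strictly increasing in $i$ because $e_{i+1}-e_i=1+\bigl(\lfloor(i+1)/q\rfloor-\lfloor i/q\rfloor\bigr)\ge 1$, so the $c$ numbers in $J$ are distinct. Moreover $e_0=1$, and since $(c-1)/q=(c-1)d/c<d$ we get $e_{c-1}=c+\lfloor(c-1)d/c\rfloor\le c+d-1$. Hence $J\subseteq\{1,\dots,c+d-1\}$; in particular $c+d\notin J$, no merging of monomials occurs, and $Q(x)=1-x^{e_0}-\cdots-x^{e_{c-1}}-x^{c+d}$ is precisely the characteristic polynomial of the recurrence~\eqref{req}.

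Finally, clearing denominators yields the identity $\bigl(\sum_{n\ge 0}w_nx^n\bigr)Q(x)=1+x+\cdots+x^{c+d-1}$ in $\Z[[x]]$. Extracting the coefficient of $x^n$: for $n\ge c+d$ the right-hand side contributes $0$, giving $w_n-\sum_{j\in J}w_{n-j}-w_{n-(c+d)}=0$, which is exactly Equation~\eqref{req}; and for $0\le n\le c+d-1$ the right-hand side contributes $1$ while the $x^{c+d}$ term of $Q$ does not reach degree $n$, giving $w_n=1+\sum_{j\in J}w_{n-j}+w_{n-(c+d)}$ under the convention $w_m=0$ for $m<0$. This last block of equations is exactly the stated recipe: set $w_m=0$ for $m<0$ and unroll~\eqref{req} for $n=0,1,\dots,c+d-1$, adding an extra $+1$ to each such $w_n$.

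I do not expect a genuine obstacle: once the $(1-x)$-cancellation is spotted, the argument is pure bookkeeping about coefficients of a rational power series. The one point I would spell out carefully is the distinctness of the exponents occurring in $Q(x)$, since that is precisely what upgrades "integer coefficients" to "$0$-$1$ coefficients"; and this rests only on the elementary facts $e_0=1$, $e_{i+1}>e_i$, and $\lfloor(c-1)d/c\rfloor\le d-1$.
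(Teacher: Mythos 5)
Your proof is correct, and it takes a genuinely different route from the paper. The paper proves the recurrence combinatorially: it introduces the map $\psi$ (insert the spawning infix $0^d1^c$ after the rightmost $0$, or append $c+d$ ones to $1^k$), shows it is a bijection from $\W_{q,n-(c+d)}$ onto the words of $\W_{q,n}$ ending in at least $c$ ones, and then splits the remaining words of $\W_{q,n}$ according to their last suffix $0^{1+\lfloor k/q\rfloor}1^k$ from $\S_q$, one case per term of the model polynomial; the initial conditions are handled by the same case analysis with the extra word $1^k$ added by hand. You instead take Theorem~\ref{wqt} as the starting point, cancel the factor $1-x$ against $1-x^{c+d}$, and read the recurrence off the cleared-denominator identity $W_q(x)\,\bigl(1-\sum_{j\in J}x^j-x^{c+d}\bigr)=1+x+\cdots+x^{c+d-1}$ by coefficient extraction. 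Both arguments are sound (yours is not circular, since Theorem~\ref{wqt} is proved independently of the recurrence). The paper's proof buys bijective insight into why the recurrence holds at the level of words; your proof buys two things the paper leaves implicit: an explicit verification that the exponents $e_i=1+\lfloor i/q\rfloor+i$ are distinct and satisfy $e_{c-1}\le c+d-1$, which is exactly what guarantees the advertised $0$-$1$ coefficients and keeps the $x^{c+d}$ term from colliding with $J$, and a transparent explanation of the initial conditions, since the numerator $1+x+\cdots+x^{c+d-1}$ accounts precisely for the ``extra $1$'' added to each of $w_0,\dots,w_{c+d-1}$.
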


\begin{proof}
  Consider the following map $\psi$ (first defined in~\cite{ourfibo})
  acting on binary words
  \begin{align*}
    \psi(1^k) & = 1^{k+c+d};\\
    \psi(v 1^\ell) & = v 0^d1^c 1^\ell, \text{if } v \text{ ends with } 0.
  \end{align*}

  We first show that $\psi$ induces a bijection from $\W_{q,k}$ to the
  subset of words from $\W_{q,k+c+d}$ ending by at least $c$ 1s.  The
  map $\psi$ inserts the spawning suffix $0^d1^c$ just after the
  rightmost 0 in a word having at least one 0. This does not change
  the property characterizing the words in $\W_q$ (see
  Definition~\ref{q}). If there are no 0s in a word from $\W_{q,k}$,
  this word is extended by adding $c+d$ 1s at the end. And again it
  does not change the characterizing property of $\W_q$. Given the
  above analysis, it easy to see that $\psi$ applied to any word in
  $\W_{q,n}$ gives us a word in $\W_{q,n+c+d}$ and this application is
  bijective.

  As follows from~Equation~\eqref{struct}, any word from $\W_{q,n}$ is
  either $1^n$ or have a form $ps$, where $s = 0^{1 + \lfloor i/q
    \rfloor}1^i$ is a word in $\S_q$ for certain $i \ge 0$, such that
  $n \ge 1 + \lfloor i/q \rfloor + i$ and $p \in \W_{q,n-(1 + \lfloor
    i/q \rfloor + i)}$.  When $n\ge c+d$ there are $c+1$ cases:

  {\bf (case $1$)} The words of $\W_{q,n}$ ending with 0 are obtained
  by adding $0$ at the right end of words from $\W_{q,n-1}$. This
  corresponds to the first term, $z$, of the model polynomial
  $P_{q=\frac{c}{d}}(y,z) = \sum_{i=0}^{c-1} z^{1 + \left\lfloor i/q \right\rfloor}
  y^i$.
  
  {\bf (case $k$, $1 < k < c$)} The words of $\W_{q,n}$ ending with
  $k$ 1s are obtained by adding the suffix $0^{1 + \lfloor k/q
    \rfloor}1^k$ at the right end of words from $\W_{q,n-(1 + \lfloor
    k/q \rfloor + k)}$.  This corresponds to the term $z^{1+\lfloor
    k/q \rfloor}y^k$ of the model polynomial $P_q(y,z)$.
  
  {\bf (case $c+1$)} The words of $\W_{q,n}$ ending with at least $c$
  1s are obtained from the words of $\W_{q,n-(c+d)}$ by applying
  $\psi$.
  
  Considering cardinalities of the sets, these $c+1$ cases give us the
  claimed recurrence relation~\eqref{req}. To construct initial
  conditions $\W_{q,0}, \W_{q,1}, \W_{q,2}, \ldots \W_{q,c+d-1}$, we
  use the same process as in previously considered cases, assuming
  that $\W_{q,m}$ contains no words for every $m < 0$, and adding an
  extra word $1^k$ into every set $\W_{q,k}$ with $0 \le k < c+d$, so
  $\W_{q,0}$ contains only the empty word $1^0$.
\end{proof}

Table~\ref{tab} presents some sequences. Remark, that recurrence
relations for sequences $(|\W_{q,n}|)_{n\ge 0}$ are equal to the
recurrence relations for certain restricted integer compositions
(ordered partitions).  For some values of $q$ the sequence
$(|\W_{q,n}|)_{n\ge 0}$ corresponds exactly to a shift of a sequence
enumerating restricted compositions (see $q=1/5$ in
Table~\ref{tab}). For other values of $q$ the initial conditions
differ from those of integer compositions. Consider, for instance, the
case $q=3/5$.  The recurrence relation is $w_n = w_{n-1} + w_{n-3} +
w_{n-6} + w_{n-8}$.  The same recurrence holds for the sequence
enumerating the compositions of $n \ge 2$ into 1s, 3s, 6s and 8s, but
the initial conditions are different. The sequence of compositions
starts with 1, 2, 3, 4, 7, 11, 17, 27, while the sequence
$(|\W_{3/5,n}|)_{n\ge0}$ begins with 1, 2, 3, 5, 8, 12, 19, 30.

\begin{table}[ht]
  \scriptsize
  \hspace*{-5em}\begin{tabular}{c|l|l|p{9em}} $q$ & Sequence &
    Recurrence relation & OEIS (with shifts)\\ \hline $1/5$ & 1, 2, 3,
    4, 5, 6, 7, 9, 12, 16, 21, 27, ... & $w_n = w_{n-1} + w_{n-6}$ &
    Compositions (ordered partitions) of $n$ into 1s and 6s.
    \oeis{A5708} \\
    
    $1/4$ & 1, 2, 3, 4, 5, 6, 8, 11, 15, 20, 26, 34, ... & $w_n =
    w_{n-1} + w_{n-5}$ & C. into 1s and 5s.  \oeis{A3520} \\
    
    $1/3$ & 1, 2, 3, 4, 5, 7, 10, 14, 19, 26, 36, 50, ... & $w_n =
    w_{n-1} + w_{n-4}$ & C. into 1s and 4s.  \oeis{A3269} \\

    $2/5$ & 1, 2, 3, 4, 6, 9, 13, 18, 26, 38, 55, 79, ... & $w_n =
    w_{n-1} + w_{n-4} + w_{n-7}$ & C. into 1s, 4s and 7s. Not in OEIS.
    \\
    
    $1/2$ & 1, 2, 3, 4, 6, 9, 13, 19, 28, 41, 60, 88, ... & $w_n =
    w_{n-1} + w_{n-3}$ & Narayana's cows, \oeis{A930}\\

    $3/5$ & 1, 2, 3, 5, 8, 12, 19, 30, 46, 72, 113, 176, ... & $w_n =
    w_{n-1} + w_{n-3} + w_{n-6} + w_{n-8}$ & NEW \\
    
    $2/3$ & 1, 2, 3, 5, 8, 12, 19, 30, 47, 74, 116, 182, ... & $w_n =
    w_{n-1} + w_{n-3} + w_{n-5}$ & C. into 1s, 3s and 5s,
    \oeis{A60961} \\

    $3/4$ & 1, 2, 3, 5, 8, 13, 21, 33, 53, 85, 136, 218, ... & $w_n =
    w_{n-1} + w_{n-3} + w_{n-5} + w_{n-7}$ & C. into 1s, 3s, 5s and
    7s, \oeis{A117760} \\

    $4/5$ & 1, 2, 3, 5, 8, 12, 19, 30, 46, 72, 113, 176, ... & $w_n =
    w_{n-1} + w_{n-3} + w_{n-5} + w_{n-7} + w_{n-9}$ & NEW \\

    $1$ & 1, 2, 3, 5, 8, 13, 21, 34, 55, 89, 144, 233, ... & $w_n =
    w_{n-1} + w_{n-2}$ & Fibonacci numbers, \oeis{A45} \\

    $5/4$ & 1, 2, 4, 7, 13, 23, 42, 75, 136, 244, 441, 794, ... & $w_n
    = w_{n-1} + w_{n-2} + w_{n-4} + w_{n-6} + w_{n-8} + w_{n-9} $ &
    NEW \\

    $4/3$ & 1, 2, 4, 7, 13, 23, 42, 75, 136, 245, 443, 799, ... & $w_n
    = w_{n-1} + w_{n-2} + w_{n-4} + w_{n-6} + w_{n-7}$ & NEW \\

    $3/2$ & 1, 2, 4, 7, 13, 23, 42, 76, 138, 250, 453, 821, ... & $w_n
    = w_{n-1} + w_{n-2} + w_{n-4} + w_{n-5} $ & NEW \\

    $5/3$ & 1, 2, 4, 7, 13, 24, 44, 81, 148, 272, 499, 916, ...& $w_n
    = w_{n-1} + w_{n-2} + w_{n-4} + w_{n-5} + w_{n-7} + w_{n-8} $ &
    NEW \\

    $2$ & 1, 2, 4, 7, 13, 24, 44, 81, 149, 274, 504, 927, ... & $w_n =
    w_{n-1} + w_{n-2} + w_{n-3}$ & Tribonacci numbers, \oeis{A73}\\

    $5/2$ & 1, 2, 4, 8, 15, 29, 56, 107, 206, 396, 761, 1463, ...&
    $w_n = w_{n-1} + w_{n-2} + w_{n-3} + w_{n-5} + w_{n-6} + w_{n-7} $
    & NEW \\

    $3$ & 1, 2, 4, 8, 15, 29, 56, 108, 208, 401, 773, 1490, ... & $w_n
    = w_{n-1} + w_{n-2} + w_{n-3} + w_{n-4}$ & Tetranacci numbers,
    \oeis{A78}\\

    $4$ & 1, 2, 4, 8, 16, 31, 61, 120, 236, 464, 912, 1793, ... & $w_n
    = w_{n-1} + w_{n-2} + w_{n-3} + w_{n-4} + w_{n-5}$ & Pentanacci
    numbers, \oeis{A1591}\\

    $5$ & 1, 2, 4, 8, 16, 32, 63, 125, 248, 492, 976, 1936, ... & $w_n
    = w_{n-1} + w_{n-2} + w_{n-3} + w_{n-4} + w_{n-5} + w_{n-6}$ &
    Hexanacci numbers, \oeis{A1592}\\ \hline $\cdots$ & $\cdots$ &
    $\cdots$ & $\cdots$ \\
  \end{tabular}
  \caption{Cardinalities of $\W_{q,n\ge0}$ for some values of $q$.}
  \label{tab}
\end{table}

\section{Gray codes}

A {\em $k$-Gray code}, named after Gray's work~\cite{gray}, for a set
$A$ of words of length $n$ is an arrangement of all words of $A$ in
such a way that any two consecutive words differ at most in $k$
positions.  As follows from a result of~\cite{ourfibo} (which applies
to the rational case also), a 3-Gray code exists for every $\W_{q,n}$
with $n \ge 0$ and any positive rational $q$.

For some values of $q$ and $n$ no 1-Gray code can exist, for example
when $q=2/3$ we have 12 words,
7 with odd number of 1s : $00001, 00100, 00010, 10000, 11001, 11100, 11111$;
and 5 with even number of 1s $00000, 10010, 10001, 11000, 11110$.
It easy to check that there is no 1-Gray in this case.

In general the question whether a 1-Gray code exists for a given $q$
is a challenging one. The Eğecioğlu-Iršič conjecture~\cite{EI} is
essentially about the existence of a 1-Gray code for $W_{1,n}, n \ge
0$. A paper~\cite{ourfibo} offers a proof for this conjecture by
presenting a sophisticated recursive construction. Here is an example
for the words of length 5 and $q=1$: 11111, 11110, 11100, 11000,
11001, 10001, 10000, 10010, 00010, 00011, 00001, 00000, 00100.  As
mentioned in~\cite{ourfibo}, experimental investigations for small
values, $0 \le n \le 5$ and $q \in \{2,3,4,5\}$, suggest the following
conjecture.

\begin{conjecture}[Baril, Kirgizov, Vajnovszki]
  Let $q \in \N^+$ and $n \ge 0$ be given.
  Then, a 1-Gray code exists for $\W_{q,n}$.
\end{conjecture}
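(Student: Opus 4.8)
The plan is to build, for every $k\in\N^+$ and every $n\ge 0$, a Hamiltonian path in the graph $\G_{k,n}$ whose vertices are the words of $\W_{k,n}$ and whose edges join words at Hamming distance $1$; reading such a path off as a list of words is exactly a $1$-Gray code. Throughout $q=k$, so $c=k$, $d=1$ and $P_k(x,x)=x+x^2+\dots+x^k$.

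The first step is to rule out the only \emph{global} obstruction. The graph $\G_{k,n}$ is bipartite, the two colour classes being the words having an even, resp.\ odd, number of $1$s, and a bipartite graph has a Hamiltonian path only if its two classes differ in size by at most $1$ --- this is precisely what fails for $q=2/3$ at $n=5$, where the discrepancy is $2$. For integer $q$ the discrepancy is always at most $1$: specialising Theorem~\ref{wqt} at $y=-x$, $z=x$ (so that $x$ records the length and the sign records the parity of the number of $1$s), and using $P_k(-x,x)=x\bigl(1-(-1)^kx^k\bigr)/(1+x)$, a short computation collapses the denominator and gives
$$W_k(-x,x)=\frac{1-(-1)^k x^{k+1}}{1-(-1)^k x^{k+2}}.$$
Expanding the right-hand side, the coefficient of $x^n$ equals $\pm1$ when $n\equiv 0$ or $n\equiv -1\pmod{k+2}$, and $0$ otherwise; hence the two colour classes differ in size by at most $1$ for every $k$ and $n$. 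This also records, for each $n$, whether the desired Hamiltonian path must have its endpoints in the same colour class ($|\W_{k,n}|$ odd) or in opposite classes ($|\W_{k,n}|$ even), which will steer the recursion.

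The second step is a recursion mirroring the relation $w_n=w_{n-1}+\dots+w_{n-k}+w_{n-(k+1)}$ of Theorem~\ref{rec}. Sorting the words of $\W_{k,n}$ by the length of their trailing run of $1$s yields a partition
$$\W_{k,n}=B_1\sqcup\dots\sqcup B_k\sqcup B_{k+1},\qquad B_j=\W_{k,n-j}\cdot 01^{j-1}\quad(1\le j\le k),$$
where $B_{k+1}=\psi\bigl(\W_{k,n-(k+1)}\bigr)$ is the set of words whose trailing $1$-run has length at least $k$ and $\psi$ is the map from the proof of Theorem~\ref{rec}. For $j\le k$ the suffix $01^{j-1}$ is fixed, so the subgraph of $\G_{k,n}$ induced on $B_j$ is isomorphic to $\G_{k,n-j}$; moreover a one-line check shows that $w01^{j-1}\in B_j$ and $w'01^{j}\in B_{j+1}$ are adjacent in $\G_{k,n}$ exactly when $w=w'0$. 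The idea is the reflected-Gray-code one: concatenate Hamiltonian paths of $B_1,\dots,B_{k+1}$, reversing some of them, so that consecutive blocks are joined through such a \emph{seam} edge. To make the seams close one proves, by induction on $m$, the strengthened statement that $\G_{k,m}$ has a Hamiltonian path whose endpoints lie in an explicit small set $E_m\subseteq\W_{k,m}$ of canonical words (built from $0^m$, $1^m$, $10^{m-1}$ and a handful of relatives), chosen so that: each interior seam matches a word $w\in E_{n-j}$ ending in $0$ with the word obtained by deleting that final $0$, which must again lie in $E_{n-j-1}$; the two outer endpoints (the global first and last words) lie in $E_n$; and the endpoint parities agree with the count forced above. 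The base cases are $n\le k+1$, where $\G_{k,n}$ is the hypercube $Q_n$ for $n\le k$ and is $Q_{k+1}$ with the single vertex $01^k$ removed for $n=k+1$; the required paths exist by classical results on Hamiltonian paths in hypercubes and in vertex-deleted hypercubes. The block $B_{k+1}$ needs extra care, since $\psi$ is not an isometry and the subgraph it induces is not a clean copy of $\G_{k,n-(k+1)}$; I would treat it by a secondary recursion, refining $B_{k+1}$ by the exact length of the trailing $1$-run and of the $0$-run preceding it, so as to extract a Hamiltonian path of $B_{k+1}$ with one endpoint adjacent to the terminal word of $B_k$.

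I expect the main obstacle to be exactly the simultaneous bookkeeping of this strengthened hypothesis: one must choose the sets $E_m$ and an orientation for every block so that all $k$ seams close \emph{at once} for every $n$, while respecting that $|\W_{k,m}|$ swings irregularly between even and odd values, so that in the odd cases the Hamiltonian path is forced to have same-parity endpoints. This is the delicate point that \cite{ourfibo} resolves for $k=1$ through an intricate case analysis; carrying it out uniformly in $k$ --- in particular, proving that a consistent family $(E_m)_m$ always exists --- is the crux, and is the reason the statement remains a conjecture here.
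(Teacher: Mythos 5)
This statement is not proved in the paper at all: it is stated as an open conjecture, supported only by the experimental checks for $0\le n\le 5$ and $q\in\{2,3,4,5\}$ mentioned just above it, with a full proof known only for $q=1$ (in~\cite{ourfibo}). So there is no proof of the paper's to compare yours against, and your text, to its credit, does not claim to close the conjecture either.

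What you do establish is worth separating from what you only sketch. The parity computation is correct and is a genuine partial result: with $c=k$, $d=1$ one has $P_k(-x,x)=x\bigl(1-(-1)^kx^k\bigr)/(1+x)$, and Theorem~\ref{wqt} indeed collapses to $W_k(-x,x)=\bigl(1-(-1)^kx^{k+1}\bigr)/\bigl(1-(-1)^kx^{k+2}\bigr)$, whose coefficients lie in $\{0,\pm1\}$; hence the bipartition of $\G_{k,n}$ by parity of the number of $1$s is always balanced to within one for integer $k$, so the obstruction that kills $q=2/3$, $n=5$ never arises. The seam criterion between consecutive blocks ($w01^{j-1}\sim w'01^{j}$ iff $w=w'0$) and the identification of the base cases as $Q_n$ (for $n\le k$) and $Q_{k+1}$ minus the vertex $01^k$ (for $n=k+1$) are also correct. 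But the heart of the argument --- the existence, uniformly in $k$ and $n$, of endpoint sets $E_m$ and block orientations making all $k$ seams close simultaneously while meeting the parity constraints, and the treatment of the block $B_{k+1}$, which is not an isometric copy of $\G_{k,n-(k+1)}$ --- is exactly the part you leave as a programme, and it is where all the difficulty of the $q=1$ case in~\cite{ourfibo} already sits. As it stands, then, the proposal is a reasonable plan plus a correct necessary-condition check, not a proof; the conjecture remains open after it.
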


\section{Generalized golden ratio}

The generalized golden ratio is defined as $\varphi_k = \lim_{n\to
  \infty}a_{n+1}/a_{n}$, where $a_{n+1}$ and $a_{n}$ are two adjacent
$k$-bonacci numbers.  The golden ratio is $\varphi_2 =
(1+\sqrt{5})/2$, and $\varphi_3 = (1+{\sqrt[{3}]{19+3{\sqrt
      {33}}}}+{\sqrt[{3}]{19-3{\sqrt {33}}}})/3$ is known as the
Tribonacci constant.  The Tetranacci constant $\varphi_4$ have quite a
large expression in radicals. In general, $\varphi_k$ is expressed as
the largest root of the polynomial $x^k - x^{k-1} - \cdots - x - 1$.
See Wolfram's paper~\cite{wolf} for full details.  In the same paper,
Wolfram conjectured that there is no expression in radicals for $k \ge
5$. By computing the Galois group, with the help of the computer
algebra system Magma~\cite{magma}, he confirmed the conjecture for $5
\le k \le 11$.  Martin~\cite{martin} proved the case of even or prime
$k$. Furthermore, Cipu and Luca~\cite{cipu} demonstrated the
impossibility of the construction of $\varphi_k$ by ruler and compass
for $k \ge 3$. As far as we can tell, the question whether there is an
expression in radicals remains open for odd non-prime $k > 11$.
Dubeau~\cite{dubeau} proved that $\varphi_k$ approaches 2 when $k \to
\infty.$

By constructing and enumerating the set $\W_{q,n}$ of restricted
binary words of length $n$, parameterized by a positive rational value
$q$, in this paper we provide a generalization of multi-step Fibonacci
numbers.  For integer $q$ we have $\varphi_{q-1} = \lim_{n\to\infty}
|\W_{q,n+1}|/|\W_{q,n}|$. Non-integer $q$, in some way, allows us to
see what happens with the generalized golden ratio, when its parameter
becomes non-integer.  As the generating functions are rational in our
case, classical analytic combinatorics method can be used to find the
limit.  It equals to $1/\beta$, where $\beta$ the smallest by modulus
root of the denominator of the corresponding generating function
$W_{q=\frac{c}{d}}(x) = \frac{1 - x^{c+d}}{(1-x)\big(1-x^{c+d} -
  P_q(x,x) \big)}$ (see Thm.~\ref{wqt}). Figure~\ref{ratio} presents
some numerical estimations for the function $q \mapsto
\lim_{n\to\infty} |\W_{q,n+1}|/|\W_{q,n}|$, where $q$ takes rational
values from $[0,2.02]$ with step $1/50$.

\begin{question}(related to Wolfram conjecture)
  For which rational values of $q$ there is an expression in radicals
  for $\phi_{q-1}=\lim_{n\to\infty} |\W_{q,n+1}|/|\W_{q,n}|$?
\end{question}

\begin{figure}[h]
  \includegraphics[width=\textwidth]{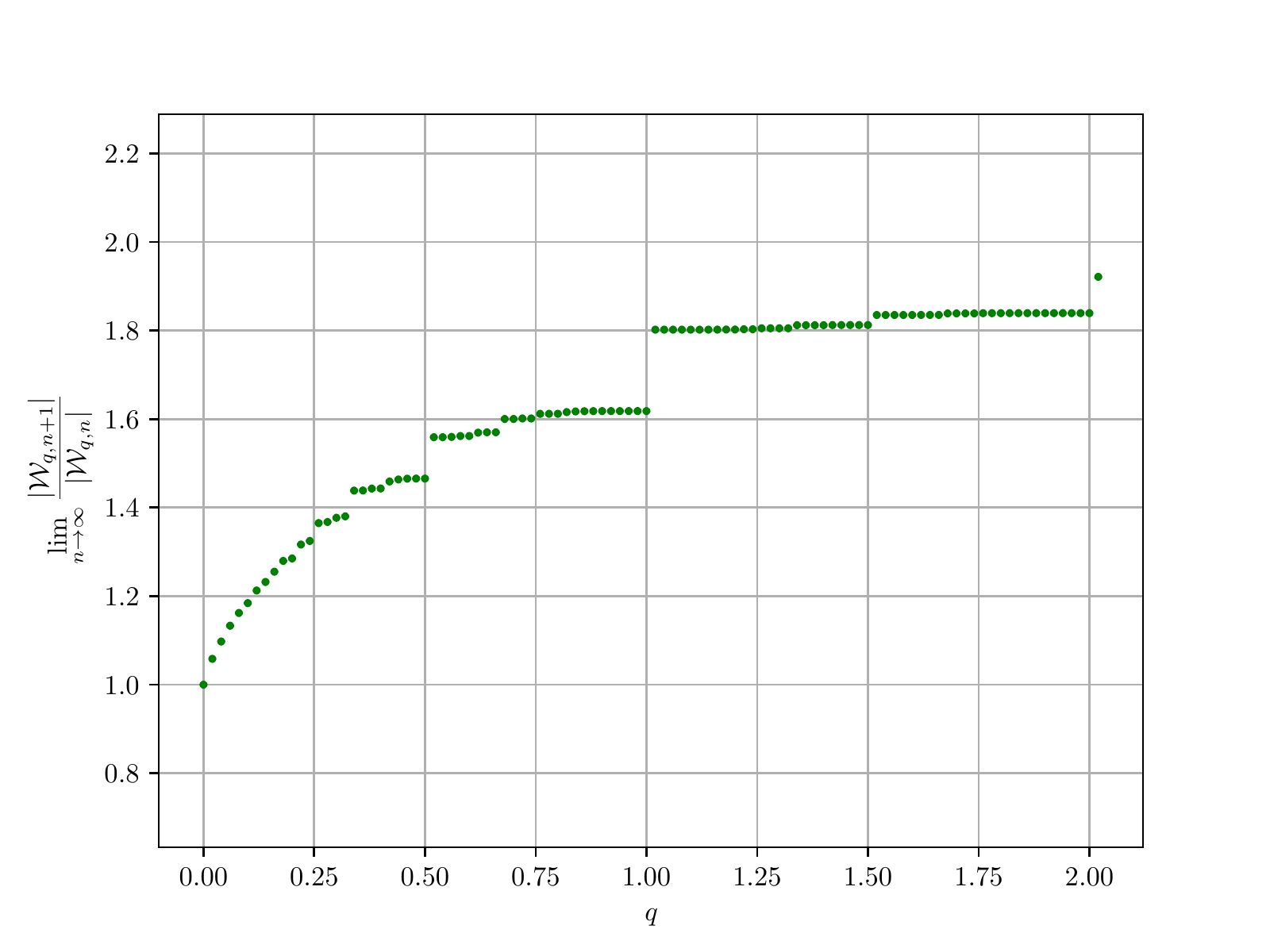}
  \caption{Numerical estimation of $\lim_{n\to\infty}
    |\W_{q,n+1}|/|\W_{q,n}|$ for several values of $r \in [0,2.02]$, using
    a step $0.02$.}
  \label{ratio}
\end{figure}

Remark, that the set $\W_{q,n}$ is well-defined even if we extend the
domain of the parameter $q$ to all positive real numbers. We have two
related conjectures in this realm:

\begin{conjecture}
Let $r \in \R^+$ be given. Then, $\lim_{n\to\infty}
|\W_{r,n+1}|/|\W_{r,n}|$ exists.
\end{conjecture}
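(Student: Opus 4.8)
\medskip

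The plan is to run the usual analytic-combinatorics argument; the one new feature is that for irrational $r$ the series $S_r$ is no longer rational. First I would observe that the decomposition~\eqref{struct} extends verbatim to every real $r>0$: reading $\S_r=\bigcup_{i\ge0}\{0^{1+\lfloor i/r\rfloor}1^{i}\}$, the proof of~\eqref{struct} peels off, from any word of $\W_r$ containing a $0$, the suffix $0^{1+\lfloor i/r\rfloor}1^{i}$ where $i$ is its number of trailing $1$s, and the only inequality it uses, $i<r(1+\lfloor i/r\rfloor)$, holds simply because $\{i/r\}<1$; rationality of $r$ plays no role. Hence~\eqref{wqx} gives
$$W_r(x)=\frac{1}{\bigl(1-S_r(x)\bigr)(1-x)},\qquad S_r(x)=\sum_{i\ge0}x^{1+i+\lfloor i/r\rfloor},$$
so $w_n:=|\W_{r,n}|=[x^n]W_r(x)$, and $w_n\ge1$ since $1^n\in\W_{r,n}$.

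Second, I would locate the dominant singularity of $W_r$. Since the exponents $e_i:=1+i+\lfloor i/r\rfloor$ strictly increase, $S_r$ is a power series with coefficients in $\{0,1\}$, infinitely many nonzero; it is thus holomorphic on $|x|<1$, and on $[0,1)$ increases continuously from $S_r(0)=0$ to $+\infty$. Let $\rho=\rho(r)\in(0,1)$ be the unique solution of $S_r(\rho)=1$. For $|x|<\rho$ one has $|S_r(x)|\le S_r(|x|)<1$, so $W_r$ is holomorphic there; at $x=\rho$ the derivative $S_r'(\rho)=\sum_{i\ge0}e_i\rho^{e_i-1}$ is finite (its radius of convergence is again $1>\rho$) and strictly positive (the $i=0$ term equals $1$), so $1-S_r$ has a simple zero at $\rho$ and $W_r$ a simple pole, with $c:=\lim_{x\to\rho}(\rho-x)W_r(x)=\bigl(S_r'(\rho)(1-\rho)\bigr)^{-1}\in(0,\infty)$.

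The step that genuinely uses the structure of these words is the uniqueness of the dominant singularity on the circle $|x|=\rho$. Writing $x=\rho\zeta$ with $|\zeta|=1$, non-negativity of the coefficients yields
$$\operatorname{Re}S_r(\rho\zeta)=\sum_{i\ge0}\rho^{e_i}\operatorname{Re}\bigl(\zeta^{e_i}\bigr)\le\sum_{i\ge0}\rho^{e_i}=S_r(\rho)=1,$$
with equality only when $\zeta^{e_i}=1$ for all $i$; since $e_0=1$, this forces $\zeta=1$. Thus $S_r(x)=1$ together with $|x|=\rho$ implies $x=\rho$ — the decisive point being merely that $S_r$ contains the monomial $x^1$. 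Consequently $\rho$ is the only zero of $1-S_r$ of modulus $\le\rho$, and since that zero set is discrete one may fix $\varepsilon>0$ with $\rho+\varepsilon<1$ such that $\rho$ is its only zero in $|x|\le\rho+\varepsilon$; there $W_r$ is holomorphic apart from the simple pole at $\rho$.

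Finally I would invoke meromorphic coefficient asymptotics. The function $G(x):=W_r(x)-\dfrac{c}{\rho-x}$ extends holomorphically to $|x|<\rho+\varepsilon$, so Cauchy's inequality gives $[x^n]G(x)=O\bigl((\rho+\varepsilon/2)^{-n}\bigr)$, whence
$$w_n=c\,\rho^{-n-1}+O\bigl((\rho+\varepsilon/2)^{-n}\bigr)\qquad\text{and}\qquad \frac{w_{n+1}}{w_n}\longrightarrow\frac{1}{\rho(r)}\quad(n\to\infty).$$
This shows the limit exists; it equals the reciprocal of the unique root in $(0,1)$ of $S_r(x)=1$, which for rational $r$ is exactly the value $1/\beta$ mentioned after Theorem~\ref{wqt}. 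The main obstacle here is conceptual rather than technical: for irrational $r$ the function $S_r$ is transcendental — a lacunary Beatty-type series, presumably with $|x|=1$ as a natural boundary — so the partial-fraction computation of the rational case is not available; the plan above circumvents this by using only that $S_r$ is holomorphic inside the unit disc, that its dominant singularity $\rho$ lies strictly inside it, and the aperiodicity supplied by the $x^1$ term.
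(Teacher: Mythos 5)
The paper does not prove this statement --- it appears only as an open conjecture, and the paper's analytic remarks (the limit as $1/\beta$, the reciprocal of the smallest root of the denominator) are made only for rational $q$, where $W_q(x)$ is rational by Theorem~\ref{wqt}. So there is no proof of the paper's to compare yours to; judged on its own, your argument looks correct and complete, and it would settle the existence conjecture. The two load-bearing points both check out: first, the decomposition~\eqref{struct} extends verbatim to every real $r>0$, because for integers $a\ge 1$, $b\ge 0$ one has $ar>b$ exactly when $a\ge 1+\lfloor b/r\rfloor$ (rationality is never used), which gives $W_r(x)=1/\bigl((1-S_r(x))(1-x)\bigr)$ with $S_r$ a lacunary series with coefficients $0$ or $1$ and radius of convergence $1$; second, since the exponent $1$ occurs in $S_r$, your positivity argument shows that $\rho$, the unique root of $S_r=1$ in $(0,1)$, is the only singularity of $W_r$ on $|x|=\rho$, and it is a simple pole because $S_r'(\rho)\ge 1$, so meromorphic coefficient asymptotics give $w_n= c\,\rho^{-n-1}+O\bigl((\rho+\varepsilon/2)^{-n}\bigr)$ and $w_{n+1}/w_n\to 1/\rho$. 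Compared with the rational case treated in the paper, your route trades the explicit linear recurrence and partial fractions for tools (positivity of coefficients, the aperiodicity supplied by the monomial $x$, Cauchy estimates on a slightly larger disc) that never require $S_r$ to be rational, which is exactly what makes the argument uniform in $r$. If you write it up, make explicit two routine steps you pass over quickly: that the series $\sum_n|\W_{r,n}|x^n$ has positive radius of convergence and agrees with the closed form near $0$, so the closed form is its meromorphic continuation on $|x|<1$ and coefficient extraction from it is legitimate; and that $1-S_r$ has only finitely many zeros in any closed subdisc of $|x|<1$, which justifies your choice of $\varepsilon$. Note finally that your proof identifies the limit as $1/\rho$, consistent with the paper's $1/\beta$ for rational $q$, but it does not touch the paper's companion conjecture on monotonicity and discontinuity of $r\mapsto\lim_{n\to\infty}|\W_{r,n+1}|/|\W_{r,n}|$.
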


\begin{conjecture}
  The function $r \mapsto \lim_{n\to\infty} |\W_{r,n+1}|/|\W_{r,n}|$
  is increasing over the interval $[0,+\infty)$ and discontinuous at
    every positive rational $r$.
\end{conjecture}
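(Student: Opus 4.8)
\noindent The plan is to circumvent the (a priori only conjectural) limit by replacing it with the reciprocal of the dominant singularity of $W_r$, and to analyse that quantity directly. For $r\in\R^+$ set $S_r(x)=\sum_{i\ge 0}x^{i+1+\lfloor i/r\rfloor}$; this is the $y=z=x$ specialisation of the series behind Lemma~\ref{l1}, and unlike the rational formula of Theorem~\ref{wqt} it makes sense for \emph{every} real $r>0$, with $W_r(x)=\frac{1}{(1-S_r(x))(1-x)}$ as in~\eqref{wqx}. Because the exponents $e_i(r):=i+1+\lfloor i/r\rfloor$ are strictly increasing in $i$ and start at $e_0(r)=1$, the function $S_r$ is a power series with $0$--$1$ coefficients, infinitely many nonzero; hence $S_r$ is continuous and strictly increasing on $[0,1)$, with $S_r(0)=0$ and $S_r(x)\to+\infty$ as $x\to 1^-$. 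So there is a unique $\beta(r)\in(0,1)$ with $S_r(\beta(r))=1$; for $|x|<\beta(r)$ one has $|S_r(x)|\le S_r(|x|)<1$, and from the triangle inequality together with $e_0(r)=1$ one sees that $\beta(r)$ is the \emph{only} zero of $1-S_r$ on $|x|\le\beta(r)$. Since the zeros of $1-S_r$ are isolated in $|x|<1$, $W_r$ is therefore meromorphic in a disc of radius $>\beta(r)$ with a single simple pole at $\beta(r)$, so $|\W_{r,n}|\sim C\beta(r)^{-n}$; this also settles the preceding (existence) conjecture and gives $\varphi(r):=\lim_n|\W_{r,n+1}|/|\W_{r,n}|=1/\beta(r)$ for $r>0$, with $\varphi(0)=1$ since $|\W_{0,n}|=n+1$. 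It now suffices to prove that $\varphi$ is strictly increasing on $[0,+\infty)$ and discontinuous at every positive rational.

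\emph{Monotonicity.} Fix $x\in(0,1)$. For each $i$ the map $r\mapsto\lfloor i/r\rfloor$ is non-increasing, so $e_i(r)$ is non-increasing in $r$ and $x^{e_i(r)}$ non-decreasing; hence $r\mapsto S_r(x)$ is non-decreasing. If $r_1<r_2$ then $i/r_1-i/r_2\to+\infty$, so $\lfloor i/r_1\rfloor>\lfloor i/r_2\rfloor$ for all large $i$; picking one such $i$ gives $S_{r_1}(x)<S_{r_2}(x)$. Evaluating at $x=\beta(r_2)$ gives $S_{r_1}(\beta(r_2))<1$, and since $S_{r_1}$ increases continuously from $0$ to $+\infty$ on $[0,1)$ this forces $\beta(r_1)>\beta(r_2)$, that is, $\varphi(r_1)<\varphi(r_2)$. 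As $\beta(r)<1$ also gives $\varphi(r)>1=\varphi(0)$ for $r>0$, the map $\varphi$ is strictly increasing on all of $[0,+\infty)$.

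\emph{Discontinuity.} The key point is that for each $i$ the step function $r\mapsto\lfloor i/r\rfloor$ is left-continuous and drops by exactly $1$ at each $r=i/m$, $m\in\N^+$. Since $0\le x^{e_i(r)}\le x^{i+1}$, the Weierstrass $M$-test shows $S_r(x)=\sum_i x^{e_i(r)}$ converges uniformly in $r$, so $r\mapsto S_r(x)$ is left-continuous everywhere and continuous at any $r_0$ at which no term jumps. If $r_0$ is irrational no term jumps (each $i/m$ is rational), so $r\mapsto S_r(x)$ is continuous there; combined with monotonicity, a squeeze argument ($S_r(x)\to S_{r_0}(x)<1$ for $x<\beta(r_0)$, and $>1$ for $x>\beta(r_0)$) gives $\beta(r)\to\beta(r_0)$, so $\varphi$ is continuous at every irrational (and similarly $\varphi(r)\to 1$ as $r\to0^+$). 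If $r_0=c/d$ in lowest terms, the terms that jump at $r_0$ are exactly those with $i=kc$, $k\ge1$ (as $\gcd(c,d)=1$), each jumping from value $x^{k(c+d)+1}$ to right-limit $x^{k(c+d)}$; summing,
\[
  \lim_{r\to r_0^+}S_r(x)=S_{r_0}(x)+(1-x)\,\frac{x^{c+d}}{1-x^{c+d}}>S_{r_0}(x)\qquad(0<x<1).
\]
The right-hand function is again continuous and strictly increasing from $0$ to $+\infty$ on $[0,1)$, so it has a unique root $\tilde\beta\in(0,1)$, and $\tilde\beta<\beta(r_0)$ because it is the root of a pointwise strictly larger function. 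The same squeeze argument now yields $\lim_{r\to r_0^+}\beta(r)=\tilde\beta<\beta(r_0)=\lim_{r\to r_0^-}\beta(r)$, so $\varphi$ jumps upward at $r_0$; it is left-continuous but discontinuous at every positive rational, which completes the plan.

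\emph{Main obstacle.} No individual step is deep, but rigour is delicate precisely where the generating function ceases to be rational. For rational $r$ everything reduces to Theorem~\ref{wqt} and standard rational-function asymptotics. For irrational $r$ one must justify, using only that $S_r$ is analytic on $|x|<1$, that $W_r$ continues meromorphically slightly past $|x|=\beta(r)$ with no further singularity on that circle (the triangle-inequality argument above, plus isolatedness of the zeros of $1-S_r$), and hence that the transfer estimate $|\W_{r,n}|\sim C\beta(r)^{-n}$ --- and with it the very existence of $\varphi(r)$ --- is legitimate. The remaining ingredients (the $M$-test for $\sum_i x^{e_i(r)}$; the monotone--squeeze lemma relating the root of $S_r(\cdot)-1$ to limits of $S_r$; and the boundary behaviour $S_r(x)\to+\infty$ as $x\to1^-$ which keeps $\beta(r)$ and $\tilde\beta$ strictly inside $(0,1)$) are routine.
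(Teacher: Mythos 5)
This statement is not proved in the paper at all --- it is left as a conjecture, supported only by the numerical evidence of Figure~\ref{ratio} --- so there is no author's proof to compare against; what you have written would be new content, and as far as I can check the strategy is correct and essentially complete. Replacing the conjectural limit by the reciprocal of the unique root $\beta(r)\in(0,1)$ of $S_r(x)=1$ is the right move: your triangle-inequality/equality argument (using that the exponent $1$ occurs in $S_r$) shows $\beta(r)$ is the unique dominant singularity, the pole is simple because $S_r'(\beta(r))>0$, and the meromorphic transfer gives $|\W_{r,n}|\sim C\beta(r)^{-n}$, which simultaneously settles the paper's preceding existence conjecture. The monotonicity step ($S_{r_1}(x)<S_{r_2}(x)$ termwise, hence $\beta(r_1)>\beta(r_2)$), the identification of the jumping indices $i=kc$ at $r_0=c/d$ with $\gcd(c,d)=1$, the jump value $(1-x)x^{c+d}/(1-x^{c+d})$, and the two squeeze arguments for one-sided limits of $\beta$ all check out, and you prove more than asked (strict increase, left-continuity, an explicit upward jump at each rational, continuity at irrationals).

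Three points need to be made explicit before this is airtight. First, Equation~\eqref{struct} and hence $W_r(x)=\frac{1}{(1-S_r(x))(1-x)}$ (Equation~\eqref{wqx}, Lemma~\ref{l1}, Theorem~\ref{wqt}) are derived in the paper only for rational $q$; you invoke them for all real $r$. The derivation does carry over verbatim --- a word of $\W_r$ containing a $0$ and ending in $1^i$ must end in $0^a1^i$ with $ar>i$, i.e.\ $a\ge 1+\lfloor i/r\rfloor$, so the suffix $0^{1+\lfloor i/r\rfloor}1^i$ can be stripped uniquely and the remaining prefix is again in $\W_r$ --- but this unambiguity is exactly what licenses the symbolic-method step and deserves a sentence. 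Second, ``the zeros of $1-S_r$ are isolated'' is not quite the fact you need: you need that they cannot accumulate in a compact subdisc of $|x|<1$ (identity theorem), so that some annulus $\beta(r)<|x|\le\rho<1$ is zero-free and the transfer estimate is legitimate; same standard fact, but state it correctly, since this is the one place where rationality of $r$ is genuinely unavailable. Third, under Definition~\ref{q} one has $|\W_{0,n}|=1$ (only $1^n$ survives at $r=0$), not $n+1$; this is immaterial, since $\varphi(0)=1$ either way and your inequality $\varphi(r)=1/\beta(r)>1$ for $r>0$ still gives monotonicity on all of $[0,+\infty)$.
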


\section*{Acknowledgments}

We would like to greatly thank Vincent Vajnovszki and Jean-Luc Baril
for insightful comments and proof-readings, Matteo Cervetti and Rémi
Maréchal for helpful discussions.  This work was supported in part by
the project ANER ARTICO funded by Bourgogne-Franche-Comté region of
France.

\end{document}